	\def\MR#1{}
\title{CR $Q$-curvature and CR pluriharmonic functions}
\author{Yuya Takeuchi}
\address{Division of Mathematics \\ Faculty of Pure and Applied Sciences \\ University of Tsukuba
	\\ 1-1-1 Tennodai, Tsukuba, Ibaraki 305-8571 Japan}
\email{ytakeuchi@math.tsukuba.ac.jp, yuya.takeuchi.math@gmail.com}
\subjclass[2010]{32V05, 32V15}
\keywords{CR $Q$-curvature, CR pluriharmonic function}
\thanks{This work was supported by JSPS Research Fellowship for Young Scientists
and JSPS KAKENHI Grant Numbers JP19J00063 and JP21K13792.}
\begin{document}

\begin{abstract}
	In this paper,
	we show that the CR $Q$-curvature is orthogonal to
	the space of CR pluriharmonic functions on any closed strictly pseudoconvex CR manifold
	of dimension at least five.
	To this end,
	we obtain a cohomological expression of the integral
	of the product of the CR $Q$-curvature and a CR pluriharmonic function.
\end{abstract}

\maketitle

\section{Introduction}
\label{section:introduction}

In conformal geometry,
the $Q$-curvature $Q_{g}$ introduced by Branson~\cite{Branson1995} has been of great importance.
It is a natural Riemannian invariant on a $2m$-dimensional manifold
and transforms as follows under the conformal change $\whg = e^{2 \Upsilon} g$:
\begin{equation}
	e^{2m \Upsilon} Q_{\whg}
	= Q_{g} + P_{g} \Upsilon,
\end{equation}
where $P_{g}$ is the critical GJMS operator~\cite{Graham-Jenne-Mason-Sparling1992}.
Since $P_{g}$ is self-adjoint and annihilates constant functions,
the integral of $Q_{g}$ defines a non-trivial global conformal invariant
of an even-dimensional closed conformal manifold.

In CR geometry,
the CR $Q$-curvature $Q_{\theta}$ has been introduced by Fefferman and Hirachi~\cite{Fefferman-Hirachi2003}
using a conformal manifold associated with a non-degenerate CR manifold.
It is a natural pseudo-Hermitian invariant on a $(2n+1)$-dimensional non-degenerate CR manifold
and transforms as follows under the conformal change $\whxth = e^{\Upsilon} \theta$:
\begin{equation}
	e^{(n+1) \Upsilon} Q_{\whxth}
	= Q_{\theta} + P_{\theta} \Upsilon,
\end{equation}
where $P_{\theta}$ is the critical CR GJMS operator~\cite{Gover-Graham2005}.
Since $P_{\theta}$ is formally self-adjoint and annihilates constant functions,
the integral of $Q_{\theta}$ defines a global CR invariant
of a closed non-degenerate CR manifold similar to conformal geometry.
However,
it turns out that this is identically zero on strictly pseudoconvex CR manifolds~\cite{Marugame2018}.
Moreover,
the CR $Q$-curvature itself is identically zero for pseudo-Einstein contact forms~\cite{Fefferman-Hirachi2003},
which are contact forms satisfying a (weak) Einstein condition.
These facts motivate us to pose the following problem:

\begin{problem}
\label{problem:contact-form-with-zero-CR-Q-curvature}
	Does any closed strictly pseudoconvex CR manifold
	admit a contact form with zero CR $Q$-curvature?
\end{problem}

This problem has been solved affirmatively
for three-dimensional embeddable CR manifolds~\cite{Takeuchi2020-Paneitz}.
However,
it is open in general.

There exists an obstruction to the existence of a contact form with zero CR $Q$-curvature.
Let $(M, T^{1, 0} M, \theta)$ be a closed pseudo-Hermitian manifold of dimension $2 n + 1$,
and $f \in \ker P_{\theta}$.
Then the integral
\begin{equation}
	\calQ(f)
	= \int_{M} f Q_{\theta} \theta \wedge (d \theta)^{n}
\end{equation}
is independent of the choice of $\theta$
and defines a CR invariant functional
\begin{equation}
	\calQ \colon \ker P_{\theta} \to \mathbb{R}.
\end{equation}
The author~\cite{Takeuchi2020-preprint} has shown that
in the case that $(M, T^{1, 0} M)$ is embeddable,
it admits a contact form with zero CR $Q$-curvature
if and only if the functional $\calQ$ is identically zero.

The aim of this paper is to show $\calQ$ is identically zero
on the space $\scrP$ of CR pluriharmonic functions.

\begin{theorem}
\label{thm:orthogonality-of-CR-Q-curvature}
	Let $(M, T^{1, 0} M)$ be a closed strictly pseudoconvex CR manifold of dimension $2n+1 \geq 5$..
	Then $\calQ$ is identically zero on $\scrP$;
	that is,
	$Q_{\theta}$ is orthogonal to $\scrP$ for any contact form $\theta$.
\end{theorem}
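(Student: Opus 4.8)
The plan is to prove that $\calQ$ vanishes on CR pluriharmonic functions by exploiting the special structure of such functions together with the CR invariance of the functional.

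The plan is to combine the CR invariance of $\calQ$ with the existence of \emph{local} pseudo-Einstein contact forms, turning $\calQ(f)$ into a pairing of cohomological data on which a globally defined CR pluriharmonic function acts trivially. The structural input I would record first is that, in dimension $2n+1 \ge 5$, every CR pluriharmonic function lies in $\ker P_{\theta}$; this is what makes $\calQ$ well defined on $\scrP$ and, more importantly, guarantees that integrating $f$ by parts against the formally self-adjoint operator $P_{\theta}$ produces no interior contribution. With this in hand the extreme case is transparent: if $M$ carried a global pseudo-Einstein contact form $\theta$, then $Q_{\theta} \equiv 0$ and $\calQ(f) = 0$ immediately. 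The theorem therefore has content only when the real first Chern class $c_{1}(T^{1,0}M)$ is nonzero, so that no global pseudo-Einstein scale exists, and it is precisely this obstruction that I would convert into an explicit cohomological formula.

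Next I would fix a finite cover $\{U_{\alpha}\}$ on which pseudo-Einstein contact forms $\theta_{\alpha}$ exist, writing $\theta = e^{\Upsilon_{\alpha}} \theta_{\alpha}$ on $U_{\alpha}$. Because $Q_{\theta_{\alpha}} \equiv 0$, the transformation law collapses to
\begin{equation}
	Q_{\theta}\,\theta \wedge (d\theta)^{n}
	= (P_{\theta_{\alpha}} \Upsilon_{\alpha})\,\theta_{\alpha} \wedge (d\theta_{\alpha})^{n}
	\qquad \text{on } U_{\alpha},
\end{equation}
while on overlaps the potentials differ by CR pluriharmonic functions, $\Upsilon_{\alpha} - \Upsilon_{\beta} \in \scrP$, the \v{C}ech cocycle $\{\Upsilon_{\alpha} - \Upsilon_{\beta}\}$ representing a multiple of $c_{1}(T^{1,0}M)$. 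Inserting this into $\calQ(f)$, moving $P_{\theta_{\alpha}}$ onto $f$ through a partition of unity $\{\chi_{\alpha}\}$ subordinate to the cover, and using $P_{\theta_{\alpha}} f = 0$ to discard the interior terms, the only surviving contributions come from the commutators $[P_{\theta_{\alpha}}, \chi_{\alpha}] f$, which are supported on the overlaps and carry derivatives of $\chi_{\alpha}$. Since $\sum_{\alpha} d\chi_{\alpha} = 0$, these reassemble into a \v{C}ech--de Rham expression identifying $\calQ(f)$ with the pairing of the class determined by $f$ against the characteristic class carried by $\{\Upsilon_{\alpha} - \Upsilon_{\beta}\}$; this is the cohomological expression advertised in the abstract.

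Finally I would show that this pairing vanishes on $\scrP$. The mechanism is that the pluriharmonicity of $f$ makes the $f$-factor cohomologically trivial against the characteristic form: the defining equation of $\scrP$ (the CR analogue of $dd^{c} f = 0$) forces the relevant integrand, a wedge of the form built from $f$ with the closed curvature form assembled from the local pseudo-Einstein data, to be exact, so that Stokes' theorem on the closed manifold $M$ yields $\calQ(f) = 0$. Because $f$ is genuinely globally defined, the class it contributes is a coboundary, which is what ultimately kills the otherwise nontrivial invariant $\calQ$ on the subspace $\scrP$.

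The main obstacle is the derivation and control of the cohomological expression in the second step. Three sub-issues must be settled: establishing that CR pluriharmonic functions are annihilated by the critical CR GJMS operator when $2n+1 \ge 5$, which legitimizes discarding every interior term; proving that the transgression terms produced by the patching are closed, so that they define a genuine cohomology class independent of the auxiliary choices of cover, local scale, and partition of unity; and identifying that class precisely enough to see that a globally defined pluriharmonic input pairs to zero. The transgression computation is a CR analogue of the secondary-characteristic-class argument underlying Marugame's vanishing of the total CR $Q$-curvature, and keeping track of its closedness and of the role of $c_{1}(T^{1,0}M)$ is where the real analytic and combinatorial effort lies.
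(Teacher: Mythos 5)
There is a genuine gap, and it sits exactly where your argument ends. Your first two steps --- local pseudo-Einstein scales $\theta_{\alpha}$, the collapse of $Q_{\theta}\,\theta \wedge (d\theta)^{n}$ to $(P_{\theta_{\alpha}} \Upsilon_{\alpha})\, \theta_{\alpha} \wedge (d\theta_{\alpha})^{n}$, self-adjointness plus $P_{\theta_{\alpha}} f = 0$ to leave only commutator terms, and the identification of $\{\Upsilon_{\alpha} - \Upsilon_{\beta}\}$ with an obstruction class tied to $c_{1}(T^{1,0}M)$ --- are plausible in outline, although the \v{C}ech--de Rham reassembly of the commutators $[P_{\theta_{\alpha}}, \chi_{\alpha}]f$ of an operator of order $2n+2$ is left entirely schematic. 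The fatal step is the last one: you claim the final pairing vanishes because ``$f$ is genuinely globally defined, [so] the class it contributes is a coboundary.'' That is false. For $f \in \scrP$, the one-form $d^{c}_{\CR} f$ is closed, but its class $[d^{c}_{\CR} f] \in H^{1}(M, \bbR)$ is in general nonzero: it vanishes precisely when $f$ is \emph{globally} the real part of a CR function, whereas CR pluriharmonicity gives this only locally. Consequently the integrand $d^{c}_{\CR} f$ wedged with the $n$-th power of the closed curvature form is closed, but its exactness is equivalent to the vanishing of the cup product $[d^{c}_{\CR} f] \cup c_{1}(T^{1,0}M)^{n}$ --- which is exactly the quantity you are trying to show is zero. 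At this point your argument is circular, and no amount of Stokes' theorem can rescue it.

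What actually kills the pairing is a global ingredient your proposal never invokes: by \cite{Takeuchi2020-Chern}*{Theorem 1.1}, one has $c_{1}(T^{1,0}M)^{n} = 0$ in $H^{2n}(M, \bbR)$ for every closed strictly pseudoconvex CR manifold of dimension $2n+1 \geq 5$. It is the $n$-th power of the Chern class that dies, not the class contributed by $f$; and that theorem rests on realizing $M$ as the boundary of a strictly pseudoconvex domain in a projective manifold (which is also where the dimension hypothesis enters). This is how the paper concludes: it first establishes the cohomological identity of \cref{thm:cohomological-expression-of-CR-Q-curvature} --- by analyzing boundary asymptotics of an approximately Einstein K\"ahler metric on a filling domain, rather than by pseudo-Einstein patching --- and then quotes the Chern-class vanishing. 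If you add that citation as the final step (and either make your \v{C}ech--de Rham computation rigorous or substitute the paper's derivation of the cohomological formula), your outline closes; without it, the theorem does not follow.
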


To this end,
we show a cohomological expression of the integral
of the product of the CR $Q$-curvature and a CR pluriharmonic function,
which is of independent interest.

\begin{theorem}
\label{thm:cohomological-expression-of-CR-Q-curvature}
	Let $(M, T^{1, 0} M)$ be as in \cref{thm:orthogonality-of-CR-Q-curvature}.
	Then for any $\Upsilon \in \scrP$ and any contact form $\theta$,
	\begin{equation}
	\label{eq:cohomological-expression-of-CR-Q-curvature}
		\langle [d^{c}_{\CR} \Upsilon] \cup c_{1}(T^{1, 0} M)^{n}, [M] \rangle
		= \frac{(n+2)^{n}}{n(n!)^{2} (2 \pi)^{n}} \int_{M} \Upsilon Q_{\theta} \theta \wedge (d \theta)^{n},
	\end{equation}
	where $d^{c}_{\CR}$ is defined by \cref{eq:definition-of-d^c_CR}.
\end{theorem}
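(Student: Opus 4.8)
The plan is to establish Theorem~\ref{thm:cohomological-expression-of-CR-Q-curvature} by constructing an explicit closed differential form whose cohomology class represents the left-hand side and whose integral against the CR $Q$-curvature density produces the right-hand side. I expect the key mechanism to be a transgression argument: one first expresses the top Chern class $c_{1}(T^{1,0}M)^{n}$ via the Chern--Weil form of a pseudo-Hermitian connection (the Tanaka--Webster connection) associated to a choice of contact form $\theta$, so that $c_{1}(T^{1,0}M)$ is represented by a multiple of the pseudo-Hermitian Ricci form $\rho_{\theta}$. The cup product with the class $[d^{c}_{\CR}\Upsilon]$ is then computed by pairing against the fundamental cycle $[M]$, which reduces to integrating a representative of $d^{c}_{\CR}\Upsilon \wedge \rho_{\theta}^{\wedge n}$ over $M$.

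\medskip

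First I would fix a contact form $\theta$ and work with the Tanaka--Webster connection, recalling that $c_{1}(T^{1,0}M)$ is represented by $-\tfrac{1}{2\pi}\rho_{\theta}$ (up to the normalization fixed by the paper's conventions), where $\rho_{\theta}$ is the Ricci two-form built from the Webster curvature. Raising this to the $n$-th power and wedging with $d^{c}_{\CR}\Upsilon$ yields a top-degree form on $M$. Since $\Upsilon$ is CR pluriharmonic, I would use the defining property of $d^{c}_{\CR}$ (Equation~\eqref{eq:definition-of-d^c_CR}) to rewrite $d^{c}_{\CR}\Upsilon$ in terms of derivatives of $\Upsilon$ that interact cleanly with $\theta$ and $d\theta$; the pluriharmonicity should guarantee that the relevant obstruction terms (those measuring failure of $\Upsilon$ to be a real part of a CR-holomorphic function) vanish, so that the pairing is well defined as a cohomological quantity independent of auxiliary choices.

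\medskip

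The computational heart is to expand the integrand $d^{c}_{\CR}\Upsilon \wedge \rho_{\theta}^{\wedge n}$ in a local admissible coframe, integrate by parts to move derivatives off $\Upsilon$ and onto the curvature terms, and recognize the resulting scalar as a contraction of $\Upsilon$ against a curvature polynomial. The goal is to match this polynomial with the explicit formula for the CR $Q$-curvature $Q_{\theta}$. Here I would lean on the known local expression of $Q_{\theta}$ as a combination of the Webster scalar curvature, its sub-Laplacian, and quadratic curvature invariants, as given by Fefferman--Hirachi and Gover--Graham. The rational constant $\tfrac{(n+2)^{n}}{n(n!)^{2}(2\pi)^{n}}$ should emerge from two sources: the $(2\pi)^{-n}$ from the Chern--Weil normalization of $c_{1}^{n}$, and the combinatorial factors $(n+2)^{n}/(n(n!)^{2})$ from expanding the $n$-th wedge power of the Ricci form and from the normalization of the volume element $\theta\wedge(d\theta)^{n}$.

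\medskip

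The main obstacle I anticipate is reconciling the curvature polynomial produced by the transgression/integration-by-parts step with the intrinsic formula for $Q_{\theta}$: both are nonlinear in the Webster curvature, and matching them requires that all terms except the ones proportional to the $Q$-curvature density cancel. This cancellation is where CR pluriharmonicity of $\Upsilon$ must do essential work, likely through the identity that $P_{\theta}\Upsilon$ (the critical GJMS operator applied to $\Upsilon$) or some divergence expression vanishes or integrates to zero against the curvature terms; controlling these error terms — and verifying that they are genuine divergences so that they disappear under $\int_{M}$ — is the delicate part. The dimension restriction $2n+1 \geq 5$ presumably enters precisely to ensure the relevant GJMS/curvature formulas are valid and that the degenerate three-dimensional phenomena (where the critical operator is the Paneitz operator and $d^{c}_{\CR}\Upsilon$ behaves differently) are avoided.
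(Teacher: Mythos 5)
There is a genuine gap at the computational heart of your plan. Your strategy hinges on ``the known local expression of $Q_{\theta}$ as a combination of the Webster scalar curvature, its sub-Laplacian, and quadratic curvature invariants, as given by Fefferman--Hirachi and Gover--Graham,'' but no such explicit pseudo-Hermitian formula for the CR $Q$-curvature exists in general dimension. Fefferman--Hirachi define $Q_{\theta}$ abstractly through the ambient/Fefferman-space construction, and closed-form expressions are only available in low dimensions ($n=1$, and $n=2$ through Case's work with the bigraded Rumin complex). Consequently the step where you ``match'' the curvature polynomial produced by integration by parts against $Q_{\theta}$ cannot be carried out: there is nothing concrete to match against, and verifying the cancellations you anticipate would itself be a hard open problem in all dimensions. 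The paper avoids this entirely by working \emph{extrinsically}: it realizes $M$ as the boundary of a strictly pseudoconvex domain $\Omega$ in a complex $(n+1)$-manifold, uses the characterization of $Q_{\theta}$ as the boundary value of the log-term coefficient $G$ in the asymptotic solution $U = \log(-\rho) - F - G(-\rho)^{n+1}\log(-\rho)$ of $\Box_{+}U = n+1+O(\rho^{\infty})$ for the asymptotically complex hyperbolic metric $\omega_{+} = -dd^{c}\log(-\rho)+\Pi$, extends $\Upsilon$ pluriharmonically to $\Omega$, and then computes the logarithmic part of $\int_{\rho<-\epsilon} dU\wedge d^{c}\Upsilon\wedge\omega_{+}^{n}$ in two ways via the symmetry $dU\wedge d^{c}\Upsilon\wedge\omega_{+}^{n} = d\Upsilon\wedge d^{c}U\wedge\omega_{+}^{n}$ and repeated use of Stokes' theorem. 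The class $c_{1}(T^{1,0}M)^{n}$ enters through the restriction of the ambient form $\Pi$ (a representative of $-2\pi c_{1}(T^{1,0}X)/(n+2)$), not through Tanaka--Webster Chern--Weil theory.

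Two further points where your reading of the statement's hypotheses goes astray. First, the restriction $2n+1\geq 5$ is not about validity of GJMS-type formulas; it is exactly what guarantees (by Boutet de Monvel's embeddability theorem and its refinements) that $M$ bounds a strictly pseudoconvex domain in a complex projective manifold, which is the starting point of the whole argument --- in dimension three this fillability can fail. Second, pluriharmonicity of $\Upsilon$ is used not through $P_{\theta}\Upsilon = 0$ or divergence identities, but to produce a pluriharmonic extension of $\Upsilon$ to the filling $\Omega$ (so that $dd^{c}\Upsilon = 0$ in the interior and $\iota_{M}^{\ast}d^{c}\Upsilon = d^{c}_{\CR}\Upsilon$), which is what makes all the boundary-integral manipulations close up.
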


Note that \cref{eq:cohomological-expression-of-CR-Q-curvature} for the three-dimensional case
has been proved by the author~\cite{Takeuchi2020-Paneitz}.
Moreover,
Case~\cite{Case2021-Q-preprint} has proved that any closed non-degenerate CR five-manifold
satisfies \cref{eq:cohomological-expression-of-CR-Q-curvature}
by using the bigraded Rumin complex via differential forms~\cite{Case2021-Rumin-preprint}.
It would be an interesting problem whether \cref{eq:cohomological-expression-of-CR-Q-curvature} holds
on all closed non-degenerate CR manifolds.

This paper is organized as follows.
In \cref{section:CR-geometry} (resp.\ \cref{section:strictly-pseudoconvex-domains}),
we recall basic facts on CR manifolds (resp.\ strictly pseudoconvex domains).
\cref{section:proof-of-main-theorems} is devoted to proofs of \cref{thm:orthogonality-of-CR-Q-curvature,thm:cohomological-expression-of-CR-Q-curvature}.

\medskip

\noindent
\emph{Notation.}
We use Einstein's summation convention and assume that 
lowercase Greek indices $\alpha, \beta, \gamma, \dots$ run from $1, \dots, n$.

Suppose that a function $I(\epsilon)$ admits an asymptotic expansion,
as $\epsilon \to + 0$,
\begin{equation}
	I(\epsilon)
	=\sum_{m = 1}^{k} a_{m} \epsilon^{-m} + b \log \epsilon + O(1).
\end{equation}
Then the \emph{logarithmic part} $\lp I(\epsilon)$ of $I(\epsilon)$ is the constant $b$.

\medskip

\section{CR geometry}
\label{section:CR-geometry}

\subsection{CR structures}
\label{subsection:CR-structures}

Let $M$ be a smooth $(2n+1)$-dimensional manifold without boundary.
A \emph{CR structure} is a rank $n$ complex subbundle $T^{1, 0} M$
of the complexified tangent bundle $T M \otimes \mathbb{C}$ such that
\begin{equation}
	T^{1, 0}M \cap T^{0, 1} M = 0, \qquad
	[\Gamma(T^{1, 0} M), \Gamma(T^{1, 0} M)] \subset \Gamma(T^{1, 0} M),
\end{equation}
where $T^{0, 1} M$ is the complex conjugate of $T^{1, 0} M$ in $T M \otimes \mathbb{C}$.
A typical example of CR manifolds is a real hypersurface $M$ in an $(n+1)$-dimensional complex manifold $X$;
this $M$ has the canonical CR structure
\begin{equation}
	T^{1, 0} M
	= T^{1, 0} X |_{M} \cap (T M \otimes \mathbb{C}).
\end{equation}
Introduce an operator $\delb_{b} \colon C^{\infty}(M) \to \Gamma((T^{0, 1} M)^{*})$ by
\begin{equation}
	\delb_{b} f = (d f)|_{T^{0, 1} M}.
\end{equation}
A smooth function $f$ is called a \emph{CR holomorphic function}
if $\delb_{b} f = 0$.
A \emph{CR pluriharmonic function} is a real-valued smooth function
that is locally the real part of a CR holomorphic function.
We denote by $\scrP$ the space of CR pluriharmonic functions.

A CR structure $T^{1, 0} M$ is said to be \emph{strictly pseudoconvex}
if there exists a nowhere-vanishing real one-form $\theta$ on $M$
such that
$\theta$ annihilates $T^{1, 0} M$ and
\begin{equation}
	- \sqrt{-1} d \theta (Z, \overline{Z}) > 0, \qquad
	0 \neq Z \in T^{1, 0} M.
\end{equation}
We call such a one-form a \emph{contact form}.
The triple $(M, T^{1, 0} M, \theta)$ is called a \emph{pseudo-Hermitian manifold}.
Denote by $T$ the \emph{Reeb vector field} with respect to $\theta$; 
that is,
the unique vector field satisfying
\begin{equation}
	\theta(T) = 1, \qquad d \theta(T, \cdot) = 0.
\end{equation}
Let $(\tensor{Z}{_{\alpha}})$ be a local frame of $T^{1, 0} M$,
and set $\tensor{Z}{_{\ovxa}} = \overline{\tensor{Z}{_{\alpha}}}$.
Then
$(T, \tensor{Z}{_{\alpha}}, \tensor{Z}{_{\ovxa}})$ gives a local frame of $T M \otimes \mathbb{C}$,
called an \emph{admissible frame}.
Its dual frame $(\theta, \tensor{\theta}{^{\alpha}}, \tensor{\theta}{^{\ovxa}})$
is called an \emph{admissible coframe}.
The two-form $d \theta$ is written as
\begin{equation}
	d \theta
	= \sqrt{-1} \tensor{l}{_{\alpha}_{\ovxb}} \tensor{\theta}{^{\alpha}} \wedge \tensor{\theta}{^{\ovxb}},
\end{equation}
where $(\tensor{l}{_{\alpha}_{\ovxb}})$ is a positive definite Hermitian matrix.
We use $\tensor{l}{_{\alpha}_{\ovxb}}$ and its inverse $\tensor{l}{^{\alpha} ^{\ovxb}}$
to raise and lower indices of tensors.

\subsection{Tanaka-Webster connection}
\label{subsection:TW-connection}

A contact form $\theta$ induces a canonical connection $\nabla$,
called the \emph{Tanaka-Webster connection} with respect to $\theta$.
It is defined by
\begin{equation}
	\nabla T
	= 0,
	\quad
	\nabla \tensor{Z}{_{\alpha}}
	= \tensor{\omega}{_{\alpha}^{\beta}} \tensor{Z}{_{\beta}},
	\quad
	\nabla \tensor{Z}{_{\ovxa}}
	= \tensor{\omega}{_{\ovxa}^{\ovxb}} \tensor{Z}{_{\ovxb}}
	\quad
	\pqty{ \tensor{\omega}{_{\ovxa}^{\ovxb}}
	= \overline{\tensor{\omega}{_{\alpha}^{\beta}}} }
\end{equation}
with the following structure equations:
\begin{gather}
\label{eq:str-eq-of-TW-conn1}
	d \tensor{\theta}{^{\beta}}
	= \tensor{\theta}{^{\alpha}} \wedge \tensor{\omega}{_{\alpha}^{\beta}}
	+ \tensor{A}{^{\beta}_{\ovxa}} \theta \wedge \tensor{\theta}{^{\ovxa}}, \\
\label{eq:str-eq-of-TW-conn2}
	d \tensor{l}{_{\alpha}_{\ovxb}}
	= \tensor{\omega}{_{\alpha}^{\gamma}} \tensor{l}{_{\gamma}_{\ovxb}}
	+ \tensor{l}{_{\alpha}_{\ovxg}} \tensor{\omega}{_{\ovxb}^{\ovxg}}.
\end{gather}
The tensor $\tensor{A}{_{\alpha}_{\beta}} = \overline{\tensor{A}{_{\ovxa}_{\ovxb}}}$
is shown to be symmetric and is called the \emph{Tanaka-Webster torsion}.
We denote the components of a successive covariant derivative of a tensor
by subscripts preceded by a comma,
for example, $\tensor{K}{_{\alpha}_{\ovxb}_{,}_{\gamma}}$;
we omit the comma if the derivatives are applied to a function.
Consider commutators of covariant derivatives.
The commutators of the second derivatives for $u \in C^{\infty}(M)$ are given by
\begin{equation}
\label{eq:commutator-of-covariant-derivatives}
	2 \tensor{u}{_{[}_{\alpha}_{\beta}_{]}}
	= 0,
	\quad
	2 \tensor{u}{_{[}_{\alpha}_{\ovxb}_{]}}
	= \sqrt{-1} \tensor{l}{_{\alpha}_{\ovxb}} u_{0},
	\quad
	2 \tensor{u}{_{[}_{0}_{\alpha}_{]}}
	= \tensor{A}{_{\alpha}_{\beta}} \tensor{u}{^{\beta}},
\end{equation}
where $[\dotsm]$ means the anti-symmetrization over the enclosed indices.
Define the \emph{sub-Laplacian} $\Delta_{b}$ by
\begin{equation}
\label{eq:definition-of-sub-Laplacian}
	\Delta_{b} u
	= - \tensor{u}{_{\ovxb} ^{\ovxb}}
		- \tensor{u}{_{\alpha}^ {\alpha}}
\end{equation}
for $u \in C^{\infty}(M)$.
It follows from \cref{eq:commutator-of-covariant-derivatives} that
\begin{equation}
\label{eq:another-form-of-sub-Laplacian}
	\Delta_{b} u
	= -2 \tensor{u}{_{\ovxb}^{\ovxb}} - \sqrt{-1} n \tensor{u}{_{0}}
	= -2 \tensor{u}{_{\alpha}^{\alpha}} + \sqrt{-1} n \tensor{u}{_{0}}.
\end{equation}

Let $\hat{\theta} = e^{\Upsilon} \theta$ be another contact form,
and denote by $\widehat{T}$ the corresponding Reeb vector field.
The admissible coframe corresponding to $(\widehat{T}, \tensor{Z}{_{\alpha}}, \tensor{Z}{_{\ovxb}})$
is given by
\begin{equation}
\label{eq:transformation-law-of-admissible-coframe}
	(\hat{\theta},
	\tensor{\hat{\theta}}{^{\alpha}} = \tensor{\theta}{^{\alpha}} + \sqrt{-1} \tensor{\Upsilon}{^{\alpha}} \theta,
	\tensor{\hat{\theta}}{^{\ovxb}} = \tensor{\theta}{^{\ovxb}} - \sqrt{-1} \tensor{\Upsilon}{^{\ovxb}} \theta).
\end{equation}
Under this conformal change,
we have
\begin{equation}
\label{eq:transformation-law-of-sub-Laplacian}
	e^{\Upsilon} \widehat{\Delta}_{b} u
	= \Delta_{b} u - n \tensor{\Upsilon}{^{\alpha}} \tensor{u}{_{\alpha}} - n \tensor{\Upsilon}{^{\ovxb}} \tensor{u}{_{\ovxb}};
\end{equation}
see \cite{Stanton1989}*{Lemma 1.8} for example.

\subsection{CR pluriharmonic functions}
\label{subsection:CR-pluriharmonic-functions}

Let $(M, T^{1, 0} M, \theta)$ be a pseudo-Hermitian manifold of dimension $2n+1$.
We first introduce a CR analogue of $d^{c} = (\sqrt{-1} / 2) (\delb - \del)$.
Such an operator has been defined by Case~\cite{Case2021-Rumin-preprint} via the bigraded Rumin complex.
However,
we give an elementary construction following~\cite{Takeuchi2020-Paneitz}
for the reader's convenience.

\begin{lemma}
	The differential operator
	\begin{equation}
	\label{eq:definition-of-d^c_CR}
		d^{c}_{\CR} \colon C^{\infty}(M) \to \Omega^{1} (M) ; \quad
		u \mapsto \frac{\sqrt{-1}}{2} \pqty{\tensor{u}{_{\ovxb}} \tensor{\theta}{^{\ovxb}}
			- \tensor{u}{_{\alpha}} \tensor{\theta}{^{\alpha}}} + \frac{1}{2n} (\Delta_{b} u) \theta
	\end{equation}
	is independent of the choice of $\theta$.
\end{lemma}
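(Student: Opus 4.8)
The plan is to show that the one-form $d^{c}_{\CR} u$ is unchanged when $\theta$ is replaced by $\widehat{\theta} = e^{\Upsilon}\theta$, by directly computing how each of the three ingredients of \cref{eq:definition-of-d^c_CR} transforms and verifying that the changes cancel. The key structural observation is that although the individual pieces $\tensor{u}{_{\alpha}}\tensor{\theta}{^{\alpha}}$, $\tensor{u}{_{\ovxb}}\tensor{\theta}{^{\ovxb}}$, and $(\Delta_{b}u)\theta$ all depend on the contact form, the coframe transforms in the controlled way recorded in \cref{eq:transformation-law-of-admissible-coframe}, and the sub-Laplacian transforms by \cref{eq:transformation-law-of-sub-Laplacian}; these two transformation laws are exactly the inputs needed.

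First I would fix an admissible coframe $(\theta, \tensor{\theta}{^{\alpha}}, \tensor{\theta}{^{\ovxb}})$ for $\theta$ and use \cref{eq:transformation-law-of-admissible-coframe} to get $\widehat{\theta} = e^{\Upsilon}\theta$, $\tensor{\hat{\theta}}{^{\alpha}} = \tensor{\theta}{^{\alpha}} + \sqrt{-1}\tensor{\Upsilon}{^{\alpha}}\theta$, and $\tensor{\hat{\theta}}{^{\ovxb}} = \tensor{\theta}{^{\ovxb}} - \sqrt{-1}\tensor{\Upsilon}{^{\ovxb}}\theta$. Since the frame $\tensor{Z}{_{\alpha}}$ is kept fixed, the $(1,0)$ and $(0,1)$ derivatives of $u$ are the same in both pseudo-Hermitian structures: $\tensor{\hat{u}}{_{\alpha}} = \tensor{u}{_{\alpha}}$ and $\tensor{\hat{u}}{_{\ovxb}} = \tensor{u}{_{\ovxb}}$, because these are just $\tensor{Z}{_{\alpha}}u$ and $\tensor{Z}{_{\ovxb}}u$. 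Substituting the transformed coframe into the antiholomorphic-minus-holomorphic part, I would collect the $\theta$-terms that arise from $\sqrt{-1}\tensor{\Upsilon}{^{\ovxb}}\theta$ and $\sqrt{-1}\tensor{\Upsilon}{^{\alpha}}\theta$. The crucial computation is to show that the resulting extra $\theta$-component, namely something of the form $\tfrac{\sqrt{-1}}{2}\bigl(-\sqrt{-1}\tensor{u}{_{\ovxb}}\tensor{\Upsilon}{^{\ovxb}} - \sqrt{-1}\tensor{u}{_{\alpha}}\tensor{\Upsilon}{^{\alpha}}\bigr)\theta = \tfrac{1}{2}\bigl(\tensor{u}{_{\alpha}}\tensor{\Upsilon}{^{\alpha}} + \tensor{u}{_{\ovxb}}\tensor{\Upsilon}{^{\ovxb}}\bigr)\theta$, is exactly cancelled by the change in the sub-Laplacian term.

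For that last term I would use $\widehat{\theta} = e^{\Upsilon}\theta$ together with \cref{eq:transformation-law-of-sub-Laplacian}, which gives $(\widehat{\Delta}_{b}u)\widehat{\theta} = e^{\Upsilon}(\widehat{\Delta}_{b}u)\theta = \bigl(\Delta_{b}u - n\tensor{\Upsilon}{^{\alpha}}\tensor{u}{_{\alpha}} - n\tensor{\Upsilon}{^{\ovxb}}\tensor{u}{_{\ovxb}}\bigr)\theta$. Dividing by $2n$, the extra contribution to the sub-Laplacian part is $-\tfrac{1}{2}\bigl(\tensor{\Upsilon}{^{\alpha}}\tensor{u}{_{\alpha}} + \tensor{\Upsilon}{^{\ovxb}}\tensor{u}{_{\ovxb}}\bigr)\theta$, which is precisely the negative of the extra term found above. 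Hence $d^{c}_{\CR}u$ computed with $\widehat{\theta}$ equals $d^{c}_{\CR}u$ computed with $\theta$, proving invariance.

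The computation is essentially routine once the two transformation laws are in hand, so I do not anticipate a genuine obstacle; the only point requiring care is the bookkeeping of the $\sqrt{-1}$ factors and signs when substituting \cref{eq:transformation-law-of-admissible-coframe} into the holomorphic and antiholomorphic pieces, and making sure the fixed-frame identities $\tensor{\hat{u}}{_{\alpha}} = \tensor{u}{_{\alpha}}$ are applied correctly. The normalization $1/(2n)$ in front of $\Delta_{b}u$ is exactly what makes the cancellation work, so the proof also explains the origin of that constant.
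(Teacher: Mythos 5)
Your proposal is correct and is essentially the paper's own proof: both substitute the coframe transformation law \cref{eq:transformation-law-of-admissible-coframe} and the sub-Laplacian transformation law \cref{eq:transformation-law-of-sub-Laplacian} into the defining formula and observe that the extra term $\tfrac{1}{2}\pqty{\tensor{u}{_{\alpha}}\tensor{\Upsilon}{^{\alpha}} + \tensor{u}{_{\ovxb}}\tensor{\Upsilon}{^{\ovxb}}}\theta$ from the coframe change cancels against $-\tfrac{1}{2}\pqty{\tensor{u}{_{\alpha}}\tensor{\Upsilon}{^{\alpha}} + \tensor{u}{_{\ovxb}}\tensor{\Upsilon}{^{\ovxb}}}\theta$ from the sub-Laplacian change. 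Your explicit remark that $\tensor{u}{_{\alpha}}$ and $\tensor{u}{_{\ovxb}}$ are unchanged because they are frame derivatives along the fixed $\tensor{Z}{_{\alpha}}$, $\tensor{Z}{_{\ovxb}}$ is a point the paper leaves implicit, but it is the same argument.
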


\begin{proof}
	From the transformation law of an admissible coframe \cref{eq:transformation-law-of-admissible-coframe}
	and the sub-Laplacian \cref{eq:transformation-law-of-sub-Laplacian},
	we obtain
	\begin{align}
		& \frac{\sqrt{-1}}{2} \pqty{ \tensor{u}{_{\ovxb}} \tensor{\hat{\theta}}{^{\ovxb}}
			- \tensor{u}{_{\alpha}} \tensor{\hat{\theta}}{^{\alpha}} }
			+ \frac{1}{2n} \pqty{ \widehat{\Delta}_{b} u } \hat{\theta} \\
		&= \frac{\sqrt{-1}}{2} \pqty{ \tensor{u}{_{\ovxb}} \tensor{\theta}{^{\ovxb}}
			- \tensor{u}{_{\alpha}} \tensor{\theta}{^{\alpha}} }
			+ \frac{1}{2} \pqty{ \tensor{u}{_{\ovxb}} \tensor{\Upsilon}{^{\ovxb}}
			+ \tensor{u}{_{\alpha}} \tensor{\Upsilon}{^{\alpha}} } \theta
			+ \frac{1}{2n} \pqty{ \Delta_{b} u - n \tensor{u}{_{\ovxb}} \tensor{\Upsilon}{^{\ovxb}}
			- n \tensor{u}{_{\alpha}} \tensor{\Upsilon}{^{\alpha}} } \theta \\
		&= \frac{\sqrt{-1}}{2} \pqty{ \tensor{u}{_{\ovxb}} \tensor{\theta}{^{\ovxb}}
			- \tensor{u}{_{\alpha}} \tensor{\theta}{^{\alpha}} } + \frac{1}{2n} (\Delta_{b} u) \theta,
	\end{align}
	which completes the proof.
\end{proof}

As in complex geometry,
CR pluriharmonic functions are smooth functions annihilated by $d d^{c}_{\CR}$.

\begin{lemma}
	For $u \in C^{\infty}(M)$,
	\begin{equation}
	\label{eq:expression-of-dd^c_CR}
		d d^{c}_{\CR} u
		= \sqrt{-1} \pqty{ \tensor{P}{_{\alpha}_{\ovxb}} u }
			\tensor{\theta}{^{\alpha}} \wedge \tensor{\theta}{^{\ovxb}}
			+ \pqty{ \tensor{P}{_{\alpha}} u } \theta \wedge \tensor{\theta}{^{\alpha}}
			+ \pqty{ \tensor{P}{_{\ovxb}} u } \theta \wedge \tensor{\theta}{^{\ovxb}},
	\end{equation}
	where
	\begin{gather}
		\tensor{P}{_{\alpha}_{\ovxb}} u
		= \tensor{u}{_{\alpha}_{\ovxb}} - \frac{1}{n} \tensor{u}{_{\gamma}^{\gamma}} \tensor{l}{_{\alpha}_{\ovxb}}
		= \tensor{u}{_{\ovxb}_{\alpha}}
			- \frac{1}{n} \tensor{u}{_{\ovxg}^{\ovxg}} \tensor{l}{_{\alpha}_{\ovxb}}, \\
		\tensor{P}{_{\alpha}} u
		= \frac{1}{n} \tensor{u}{_{\ovxg}^{\ovxg}_{\alpha}}
			+ \sqrt{- 1} \tensor{A}{_{\alpha}_{\gamma}} \tensor{u}{^{\gamma}},
		\qquad
		\tensor{P}{_{\ovxb}} u
		= \frac{1}{n} \tensor{u}{_{\gamma}^{\gamma}_{\ovxb}}
			- \sqrt{- 1} \tensor{A}{_{\ovxb}_{\ovxg}} \tensor{u}{^{\ovxg}}.
	\end{gather}
	In particular,
	u is a CR pluriharmonic function if and only if $d d^{c}_{\CR} u = 0$.
\end{lemma}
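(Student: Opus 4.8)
The plan is to compute $d d^{c}_{\CR} u$ directly from \cref{eq:definition-of-d^c_CR} and then to deduce the characterization of $\scrP$ from the resulting formula. First I would apply the exterior derivative to each of the three terms of $d^{c}_{\CR}u$, using $d u_{\alpha} = u_{\alpha\beta}\theta^{\beta} + u_{\alpha\ovxb}\theta^{\ovxb} + u_{\alpha 0}\theta + u_{\beta}\omega_{\alpha}{}^{\beta}$ and its conjugate, the structure equation \cref{eq:str-eq-of-TW-conn1} together with its conjugate, and $d\theta = \sqrt{-1}\,l_{\alpha\ovxb}\,\theta^{\alpha}\wedge\theta^{\ovxb}$. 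Before collecting terms I would check that the connection one-forms $\omega_{\alpha}{}^{\beta}$ arising from $d u_{\alpha}\wedge\theta^{\alpha}$ cancel those arising from $u_{\alpha}\,d\theta^{\alpha}$, and similarly for the barred pieces; this confirms tensoriality. I would then group the remaining terms according to the independent frame two-forms $\theta^{\alpha}\wedge\theta^{\ovxb}$, $\theta^{\alpha}\wedge\theta^{\beta}$, $\theta^{\ovxa}\wedge\theta^{\ovxb}$, $\theta\wedge\theta^{\alpha}$, and $\theta\wedge\theta^{\ovxb}$.

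The pure terms $\theta^{\alpha}\wedge\theta^{\beta}$ and $\theta^{\ovxa}\wedge\theta^{\ovxb}$ drop out at once, since $u_{\alpha\beta} = u_{\beta\alpha}$ by the first identity in \cref{eq:commutator-of-covariant-derivatives}. For the $\theta^{\alpha}\wedge\theta^{\ovxb}$ coefficient I would assemble $\tfrac{\sqrt{-1}}{2}(u_{\ovxb\alpha} + u_{\alpha\ovxb})$ with the term $\tfrac{\sqrt{-1}}{2n}(\Delta_{b}u)\,l_{\alpha\ovxb}$ coming from $\tfrac{1}{2n}(\Delta_{b}u)\,d\theta$; applying the commutator $u_{\alpha\ovxb} - u_{\ovxb\alpha} = \sqrt{-1}\,l_{\alpha\ovxb}u_{0}$ and the two expressions for $\Delta_{b}u$ in \cref{eq:another-form-of-sub-Laplacian}, all $u_{0}$-terms cancel and this coefficient collapses to $\sqrt{-1}\,(u_{\alpha\ovxb} - \tfrac{1}{n}u_{\gamma}{}^{\gamma}l_{\alpha\ovxb}) = \sqrt{-1}\,P_{\alpha\ovxb}u$.

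The step I expect to be the main obstacle is the $\theta\wedge\theta^{\alpha}$ coefficient, the $\theta\wedge\theta^{\ovxb}$ one being its conjugate. Three sources feed into it: the term $u_{\alpha 0}\theta$ of $d u_{\alpha}$; the torsion term $A^{\ovxb}{}_{\alpha}\,\theta\wedge\theta^{\alpha}$ of the conjugate of \cref{eq:str-eq-of-TW-conn1} for $d\theta^{\ovxb}$; and the $\theta^{\alpha}$-part of $\tfrac{1}{2n}\,d(\Delta_{b}u)\wedge\theta$, for which I would again use \cref{eq:another-form-of-sub-Laplacian}. The first and third combine into $\tfrac{\sqrt{-1}}{2}(u_{0\alpha} - u_{\alpha 0}) = \tfrac{\sqrt{-1}}{2}A_{\alpha\beta}u^{\beta}$ by the last identity in \cref{eq:commutator-of-covariant-derivatives}; the torsion term becomes $\tfrac{\sqrt{-1}}{2}A_{\alpha\gamma}u^{\gamma}$ once the index is lowered via $A^{\ovxb}{}_{\alpha}u_{\ovxb} = A_{\alpha\gamma}u^{\gamma}$; and together with the surviving third-order term $\tfrac{1}{n}u_{\ovxg}{}^{\ovxg}{}_{\alpha}$ this is exactly $P_{\alpha}u$. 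Keeping the several $u_{0}$-terms and the index raisings straight is the only genuine bookkeeping hazard here.

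Finally, because the five frame two-forms are linearly independent, \cref{eq:expression-of-dd^c_CR} shows $d d^{c}_{\CR}u = 0$ if and only if $P_{\alpha\ovxb}u$, $P_{\alpha}u$, and $P_{\ovxb}u$ all vanish. If $u\in\scrP$, then locally $u = \Re f$ with $\delb_{b}f = 0$; a direct computation from $f_{\ovxb} = 0$, the commutators, and \cref{eq:another-form-of-sub-Laplacian} gives $d^{c}_{\CR}f = -\tfrac{\sqrt{-1}}{2}\,df$, whence $d d^{c}_{\CR}f = 0$ and, taking real parts, $d d^{c}_{\CR}u = 0$. Conversely, if $u$ is real with $d d^{c}_{\CR}u = 0$, then $d^{c}_{\CR}u$ is a closed real one-form, hence locally $d^{c}_{\CR}u = dv$ for a real function $v$; comparing the $\theta^{\ovxb}$-components yields $v_{\ovxb} = \tfrac{\sqrt{-1}}{2}u_{\ovxb}$, so $f := u + 2\sqrt{-1}\,v$ satisfies $\delb_{b}f = 0$ and $\Re f = u$, and therefore $u$ is CR pluriharmonic.
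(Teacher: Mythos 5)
Your proof of \cref{eq:expression-of-dd^c_CR} is correct and is essentially the paper's own computation: expand $d$ of \cref{eq:definition-of-d^c_CR}, note the cancellation of connection forms, kill the $(2,0)$ and $(0,2)$ parts via $u_{\alpha\beta}=u_{\beta\alpha}$, and reduce the remaining coefficients with \cref{eq:commutator-of-covariant-derivatives,eq:another-form-of-sub-Laplacian}; your bookkeeping for the $\theta\wedge\theta^{\alpha}$ coefficient matches the paper's line by line. Where you genuinely diverge is the ``in particular'' statement. The paper deduces it from Lee's characterization of CR pluriharmonic functions (\cite{Lee1988}*{Propositions 3.3 and 3.4}) combined with the divergence identities relating $P_{\alpha\ovxb}$ to $P_{\alpha}$ and $P_{\ovxb}$ (\cite{Hirachi2014}*{Lemma 3.2}), so that for $n\geq 2$ the single condition $P_{\alpha\ovxb}u=0$ forces $P_{\alpha}u=P_{\ovxb}u=0$ and is equivalent to $u\in\scrP$. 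You instead argue from scratch: for CR holomorphic $f$ the relation $f_{\ovxb}\equiv 0$ gives $f_{\ovxb}{}^{\ovxb}=0$, hence $\Delta_b f=-\sqrt{-1}\,n f_{0}$ by \cref{eq:another-form-of-sub-Laplacian} and $d^{c}_{\CR}f=-\tfrac{\sqrt{-1}}{2}df$, so $dd^{c}_{\CR}(\Re f)=0$ (using that $d^{c}_{\CR}$ commutes with conjugation); conversely, from $dd^{c}_{\CR}u=0$ the Poincar\'e lemma produces a local real primitive $v$ of $d^{c}_{\CR}u$, and the component identity $v_{\ovxb}=\tfrac{\sqrt{-1}}{2}u_{\ovxb}$ makes $f=u+2\sqrt{-1}\,v$ CR holomorphic with $\Re f=u$. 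I checked both directions; they are sound. What the paper's route buys is brevity and explicit contact with the known operators $P_{\alpha\ovxb}$, $P_{\alpha}$; what yours buys is self-containedness (no appeal to Lee's propositions) and uniformity in dimension: your argument also covers $n=1$, where $P_{\alpha\ovxb}u$ vanishes identically, the divergence identities are vacuous, and Lee's Proposition 3.4 does not apply -- though for the paper's purposes only $n\geq 2$ is needed.
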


\begin{proof}
	We first show \cref{eq:expression-of-dd^c_CR}.
	From \cref{eq:definition-of-d^c_CR},
	it follows that
	\begin{align}
		d d^{c}_{\CR} u
		&= \frac{\sqrt{-1}}{2} \pqty{ \tensor{u}{_{\ovxb}_{\alpha}}
			+ \tensor{u}{_{\alpha}_{\ovxb}} + \frac{1}{n} (\Delta_{b} u) \tensor{l}{_{\alpha}_{\ovxb}} }
			\tensor{\theta}{^{\alpha}} \wedge \tensor{\theta}{^{\ovxb}} \\
		&\quad + \bqty{ - \frac{1}{2n} \tensor{(\Delta_{b} u)}{_{\alpha}}
			- \frac{\sqrt{-1}}{2} \tensor{u}{_{\alpha}_{0}}
			+ \frac{\sqrt{-1}}{2} \tensor{A}{_{\alpha}_{\gamma}} \tensor{u}{^{\gamma}} }
			\theta \wedge \tensor{\theta}{^{\alpha}} \\
		&\quad + \bqty{ - \frac{1}{2n} \tensor{(\Delta_{b} u)}{_{\ovxb}}
			+ \frac{\sqrt{-1}}{2} \tensor{u}{_{\ovxb}_{0}}
			- \frac{\sqrt{-1}}{2} \tensor{A}{_{\ovxb}_{\ovxg}} \tensor{u}{^{\ovxg}} }
			\theta \wedge \tensor{\theta}{^{\ovxb}}.
	\end{align}
	Combining this with \cref{eq:commutator-of-covariant-derivatives,eq:another-form-of-sub-Laplacian}
	yields that the $(1,1)$-part of $d d^{c}_{\CR} u$ is equal to $\sqrt{-1} \tensor{P}{_{\alpha}_{\ovxb}} u$.
	Hence it suffices to show
	\begin{equation}
		\tensor{P}{_{\alpha}} u
		= - \frac{1}{2n} \tensor{(\Delta_{b} u)}{_{\alpha}} - \frac{\sqrt{-1}}{2} \tensor{u}{_{\alpha}_{0}}
			+ \frac{\sqrt{-1}}{2} \tensor{A}{_{\alpha}_{\gamma}} \tensor{u}{^{\gamma}};
	\end{equation}
	the other part is the complex conjugate of this equality.
	By using \cref{eq:commutator-of-covariant-derivatives,eq:another-form-of-sub-Laplacian},
	we have
	\begin{align}
		&- \frac{1}{2n} \tensor{(\Delta_{b} u)}{_{\alpha}} - \frac{\sqrt{-1}}{2} \tensor{u}{_{\alpha}_{0}}
			+ \frac{\sqrt{-1}}{2} \tensor{A}{_{\alpha}_{\gamma}} \tensor{u}{^{\gamma}} \\
		&= \frac{1}{n} \tensor{u}{_{\ovxg}^{\ovxg}_{\alpha}}
			+ \frac{\sqrt{-1}}{2} \pqty{\tensor{u}{_{0}_{\alpha}} - \tensor{u}{_{\alpha}_{0}}}
			+ \frac{\sqrt{-1}}{2} \tensor{A}{_{\alpha}_{\gamma}} \tensor{u}{^{\gamma}} \\
		&= \frac{1}{n} \tensor{u}{_{\ovxg}^{\ovxg}_{\alpha}}
			+ \sqrt{-1} \tensor{A}{_{\alpha}_{\gamma}} \tensor{u}{^{\gamma}} \\
		&= \tensor{P}{_{\alpha}} u.
	\end{align}
	The latter statement is a consequence of \cite{Lee1988}*{Propositions 3.3 and 3.4}
	and the fact that
	\begin{equation}
		(\tensor{P}{_{\alpha}_{\ovxb}} u) \tensor{}{_{,}^{\ovxb}}
		= (n-1) \tensor{P}{_{\alpha}} u,
		\qquad
		(\tensor{P}{_{\alpha}_{\ovxb}} u) \tensor{}{_{,}^{\alpha}}
		= (n-1) \tensor{P}{_{\ovxb}} u;
	\end{equation}
	see \cite{Hirachi2014}*{Lemma 3.2}.
\end{proof}

A two-form $\mu$ on $M$ has \emph{trace-free $(1, 1)$-part} if
\begin{equation}
	\mu
	\equiv \sqrt{- 1} \tensor{\mu}{_{\alpha}_{\ovxb}} \theta^{\alpha} \wedge \theta^{\ovxb}
	\quad \text{modulo $\theta, \theta^{\alpha} \wedge \theta^{\gamma}, \theta^{\ovxb} \wedge \theta^{\ovxg}$}
\end{equation}
with $\tensor{\mu}{_{\alpha}^{\alpha}} = 0$.
Note that $d d^{c}_{\CR} u$ has trace-free $(1, 1)$-part,
which follows from \cref{eq:expression-of-dd^c_CR}.

\section{Strictly pseudoconvex domains}
\label{section:strictly-pseudoconvex-domains}

Let $\Omega$ be a relatively compact domain in an $(n+1)$-dimensional complex manifold $X$
with smooth boundary $M = \bdry \Omega$.
Denote by $\iota_{M}$ the inclusion $M \hookrightarrow X$.
There exists a smooth function $\rho$ on $X$ such that
\begin{equation}
	\Omega = \rho^{-1}((- \infty, 0)), \quad
	M = \rho^{-1}(0), \quad
	d \rho \neq 0 \ \quad \text{on} \ M;
\end{equation}
such a $\rho$ is called a \emph{defining function} of $\Omega$.
A domain $\Omega$ is said to be \emph{strictly pseudoconvex}
if we can take a defining function $\rho$ of $\Omega$ that is strictly plurisubharmonic near $M$.
The boundary $M$ is a closed strictly pseudoconvex real hypersurface
and $\iota_{M}^{\ast} d^{c} \rho$ is a contact form on $M$.
Conversely,
it is known that
any closed connected strictly pseudoconvex CR manifold of dimension at least five can be realized as
the boundary of a strictly pseudoconvex domain
in a complex projective manifold~\cites{Boutet_de_Monvel1975,Harvey-Lawson1975,Lempert1995}.

\subsection{Trace-free extension and $d^{c}_{\CR}$}

Assume that $M$ is realized as the boundary of a strictly pseudoconvex domain $\Omega$
in a complex manifold $X$ of complex dimension $n+1$.
Take a defining function $\rho$ of $\Omega$ with $\iota_{M}^{\ast} d^{c} \rho = \theta$.

\begin{lemma}
\label{lem:trace-free-extension}
	For each $u \in C^{\infty}(M)$,
	there exists a smooth extension $\widetilde{u}$ such that
	$\iota_{M}^{\ast} d d^{c} \wtu$ has trace-free $(1,1)$-part.
	Moreover,
	such a $\wtu$ is unique modulo $O(\rho^{2})$,
	and $\iota_{M}^{\ast} d^{c} \wtu$ coincides with $d^{c}_{\CR} u$.
\end{lemma}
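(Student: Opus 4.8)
The plan is to exploit the elementary identity $\iota_{M}^{\ast} d d^{c} \widetilde{u} = d(\iota_{M}^{\ast} d^{c} \widetilde{u})$, valid because pullback commutes with the exterior derivative. This is the crucial simplification: although $d d^{c} \widetilde{u}$ is second order in $\widetilde{u}$, the two-form $\iota_{M}^{\ast} d d^{c} \widetilde{u}$ is the differential of the one-form $\iota_{M}^{\ast} d^{c} \widetilde{u}$ on $M$, and the latter depends only on the $1$-jet of $\widetilde{u}$ along $M$. Thus the trace-free condition becomes a condition on the first transverse derivative of $\widetilde{u}$ alone. First I would fix an adapted ambient $(1,0)$-coframe near $M$: take $\zeta^{0} = \del \rho$ together with forms $\zeta^{\alpha}$ dual to a $(1,0)$-frame $Z_{\alpha}$ of $T^{1,0} X$ that is tangent to $M$ along $M$ and restricts there to the given CR frame. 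Since $\iota_{M}^{\ast}(\del \rho + \delb \rho) = \iota_{M}^{\ast} d \rho = 0$ and $\iota_{M}^{\ast} d^{c} \rho = \theta$, one finds $\iota_{M}^{\ast} \del \rho = \sqrt{-1}\, \theta$; and after replacing $\zeta^{\alpha}$ by $\zeta^{\alpha} + \lambda^{\alpha} \del \rho$ for suitable functions $\lambda^{\alpha}$ one may arrange $\iota_{M}^{\ast} \zeta^{\alpha} = \theta^{\alpha}$. Writing $Z_{0}$ for the $(1,0)$-vector dual to $\del \rho$ and $N = Z_{0} + Z_{\overline{0}}$, one has $N \rho = 2$, so $N$ is transverse to $M$.

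Next I would compute $\iota_{M}^{\ast} d^{c} \widetilde{u}$ directly. Writing $\del \widetilde{u} = (Z_{0} \widetilde{u}) \zeta^{0} + (Z_{\alpha} \widetilde{u}) \zeta^{\alpha}$ and its conjugate, and using $\iota_{M}^{\ast} \zeta^{0} = \sqrt{-1}\, \theta$, $\iota_{M}^{\ast} \zeta^{\alpha} = \theta^{\alpha}$ together with the fact that the tangential derivatives restrict as $Z_{\alpha} \widetilde{u}|_{M} = u_{\alpha}$ and $Z_{\overline{\beta}} \widetilde{u}|_{M} = u_{\overline{\beta}}$, a short calculation gives
\[
	\iota_{M}^{\ast} d^{c} \widetilde{u} = \frac{\sqrt{-1}}{2} \pqty{ u_{\overline{\beta}} \theta^{\overline{\beta}} - u_{\alpha} \theta^{\alpha} } + \frac{1}{2} (N \widetilde{u})|_{M}\, \theta .
\]
Comparing with the definition of $d^{c}_{\CR}$ in \cref{eq:definition-of-d^c_CR}, this reads $\iota_{M}^{\ast} d^{c} \widetilde{u} = d^{c}_{\CR} u + \tfrac{1}{2} h\, \theta$, where $h = (N \widetilde{u})|_{M} - \tfrac{1}{n} \Delta_{b} u$. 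Because $N$ is transverse, as the extension $\widetilde{u}$ varies the function $(N \widetilde{u})|_{M}$ ranges over all of $C^{\infty}(M)$; thus $h$ is an arbitrary smooth function encoding precisely the first transverse derivative of $\widetilde{u}$.

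Finally I would apply $d$. By the reduction above, $\iota_{M}^{\ast} d d^{c} \widetilde{u} = d\, d^{c}_{\CR} u + \tfrac{1}{2} d(h\, \theta)$. The first term has trace-free $(1,1)$-part by the remark following \cref{eq:expression-of-dd^c_CR}: its $(1,1)$-part is $\sqrt{-1} (P_{\alpha \overline{\beta}} u) \theta^{\alpha} \wedge \theta^{\overline{\beta}}$ with $P_{\alpha}{}^{\alpha} u = 0$. In the second term only $h\, d \theta = \sqrt{-1}\, h\, l_{\alpha \overline{\beta}} \theta^{\alpha} \wedge \theta^{\overline{\beta}}$ contributes to the $(1,1)$-part, since $d h \wedge \theta$ is a combination of $\theta \wedge \theta^{\alpha}$ and $\theta \wedge \theta^{\overline{\beta}}$. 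Hence the $(1,1)$-part of $\iota_{M}^{\ast} d d^{c} \widetilde{u}$ equals $\sqrt{-1} \pqty{ P_{\alpha \overline{\beta}} u + \tfrac{1}{2} h\, l_{\alpha \overline{\beta}} } \theta^{\alpha} \wedge \theta^{\overline{\beta}}$, whose trace is $\tfrac{n}{2} h$. Therefore $\iota_{M}^{\ast} d d^{c} \widetilde{u}$ has trace-free $(1,1)$-part if and only if $h = 0$, that is, $(N \widetilde{u})|_{M} = \tfrac{1}{n} \Delta_{b} u$. This single scalar equation fixes the first transverse derivative of $\widetilde{u}$, so such an extension exists and is unique modulo $O(\rho^{2})$; and for it the formula above gives $\iota_{M}^{\ast} d^{c} \widetilde{u} = d^{c}_{\CR} u$. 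I expect the main technical point to be the adapted-coframe setup, in particular the normalization $\iota_{M}^{\ast} \del \rho = \sqrt{-1}\, \theta$ and the verification that the tangential holomorphic derivatives of $\widetilde{u}$ restrict to the intrinsic CR derivatives $u_{\alpha}$, since every sign and constant in the displayed formula rests on it; once that is in place the remainder is a one-line trace computation.
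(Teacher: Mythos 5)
Your proposal is correct and takes essentially the same approach as the paper: both arguments show that $\iota_{M}^{\ast} d d^{c} \widetilde{u}$ depends on the extension only through the $\theta$-coefficient of $\iota_{M}^{\ast} d^{c} \widetilde{u}$ (equivalently, the first transverse derivative of $\widetilde{u}$ along $M$), whose variation shifts the $(1,1)$-part by a multiple of $l_{\alpha\overline{\beta}}$, so trace-freeness pins it down to exactly the value giving $\iota_{M}^{\ast} d^{c} \widetilde{u} = d^{c}_{\CR} u$. The only cosmetic differences are that the paper parametrizes the freedom by corrections $\rho v$ and computes the trace directly, whereas you prescribe $(N \widetilde{u})|_{M}$ in an adapted coframe and quote the trace-freeness of $d d^{c}_{\CR} u$ from the preceding lemma.
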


\begin{proof}
	Take a smooth function $u^{\prime}$ on $X$ with $u^{\prime} |_{M} = u$.
	Then
	\begin{equation}
		\iota_{M}^{\ast} d^{c} u^{\prime}
		= \frac{\sqrt{-1}}{2} (\tensor{u}{_{\ovxb}} \tensor{\theta}{^{\ovxb}}
			- \tensor{u}{_{\alpha}} \tensor{\theta}{^{\alpha}})
			+ \lambda \theta
	\end{equation}
	for some $\lambda \in C^{\infty}(M)$.
	Hence the $(1,1)$-part of $\iota_{M}^{\ast} d d^{c} u^{\prime}$ is given by
	\begin{equation}
		\frac{\sqrt{-1}}{2}
		\pqty{ \tensor{u}{_{\ovxb}_{\alpha}} + \tensor{u}{_{\alpha}_{\ovxb}}
		+ 2 \lambda \tensor{l}{_{\alpha}_{\ovxb}} }
		\tensor{\theta}{^{\alpha}} \wedge \tensor{\theta}{^{\ovxb}}.
	\end{equation}
	On the other hand,
	the $(1,1)$-part of $\iota_{M}^{\ast} d d^{c} (\rho v)$ for $v \in C^{\infty} (X)$ coincides with
	\begin{equation}
		\sqrt{-1} v |_{M} \tensor{l}{_{\alpha}_{\ovxb}} \tensor{\theta}{^{\alpha}} \wedge \tensor{\theta}{^{\ovxb}}.
	\end{equation}
	If we choose $v$ so that $v |_{M} = (2n)^{-1} \Delta_{b} u - \lambda$,
	the $(1,1)$-part of $\iota_{M}^{\ast} d d^{c} (u + \rho v)$ is trace-free,
	which gives the existence of $\widetilde{u}$.
	It follows from the construction that
	$\iota_{M}^{\ast} d^{c} \widetilde{u} = d^{c}_{\CR} u$.
	The uniqueness of $\widetilde{u}$ modulo $O(\rho^{2})$
	is a consequence of the above computation of the $(1,1)$-part of
	$\iota_{M}^{\ast} d d^{c} (\rho v)$.
\end{proof}

\subsection{Asymptotically complex hyperbolic metrics}
\label{subsection:asymptotically-complex-hyperbolic-metrics}

Let $\Omega$ be a strictly pseudoconvex domain
in an $(n+1)$-dimensional complex manifold $X$ with $\bdry \Omega = M$.
Take a defining function $\rho$ of $\Omega$.
There exists a Hermitian metric $h$ on $K_{X}$ such that
\begin{equation}
	\omega_{+}
	= - d d^{c} \log (- \rho) + \sqrt{-1} (n+2)^{-1} \Theta_{h}
\end{equation}
defines a K\"{a}hler metric near the boundary
and satisfies
\begin{equation}
	\Ric_{\omega_{+}} + (n+2) \omega_{+}
	= d d^{c} O(\rho^{n+2});
\end{equation}
see~\cite{Hirachi2014}*{Section 2.2} for example.
To simplify notation,
we set $\Pi = \sqrt{-1} (n+2)^{-1} \Theta_{h}$.
Note that $\Pi$ is a closed real $(1, 1)$-form on $X$
and $-(n+2) \Pi$ is a representative of $2 \pi c_{1}(T^{1, 0} X)$.
We also note that
the pull-back of $\omega_{+}$ to the level set $\Set{\rho = - \epsilon}$
is equal to that of $\epsilon^{-1} d \vartheta + \Pi$,
where $\vartheta = d^{c} \rho$.

\subsection{CR $Q$-curvature}
\label{subsection:CR-Q-curvature}

Let $\Omega$ be a strictly pseudoconvex domain
in an $(n+1)$-dimensional complex manifold $X$ with $\bdry \Omega = M$.
Take a defining function $\rho$ of $\Omega$ with $\iota_{M}^{\ast} d^{c} \rho = \theta$.
Denote by $\Box_{+}$ the $\delb$-Laplacian with respect to $\omega_{+}$.
There exist $F, G \in C^{\infty}(\ovxco)$ such that
$F |_{M} = 0$
and
\begin{equation}
	U = \log (- \rho) - F - G (- \rho)^{n+1} \log (- \rho)
\end{equation} satisfies
\begin{equation}
	\Box_{+} U = n + 1 + O(\rho^{\infty}).
\end{equation}
Moreover,
the boundary value $G |_{M}$ of $G$ is given by
\begin{equation}
\label{eq:definition-of-CR-Q-curvature}
	G |_{M}
	= \frac{(- 1)^{n+1}}{n!(n+1)!} Q_{\theta},
\end{equation}
where $Q_{\theta}$ is the \emph{CR $Q$-curvature}.
This is a consequence of a characterization of the CR $Q$-curvature via $\Box_{+}$~\cite{Hirachi2014}*{Lemma 4.4}.

\section{Proof of main theorems}
\label{section:proof-of-main-theorems}

Let $(M, T^{1, 0} M)$ be as in \cref{thm:cohomological-expression-of-CR-Q-curvature}.
Without loss of generality,
we may assume that $M$ is connected.
There exists a strictly pseudoconvex domain $\Omega$ in an $(n+1)$-dimensional complex manifold $X$
with $\bdry \Omega = M$.
Fix a defining function $\rho$ of $\Omega$.
Let $\omega_{+} = - d d^{c} \log (- \rho) + \Pi$ and $U = \log (- \rho) - F - G (- \rho)^{n+1} \log (- \rho)$
be as in \cref{subsection:asymptotically-complex-hyperbolic-metrics} and \cref{subsection:CR-Q-curvature} respectively.
For smooth functions $f_{1}, f_{2}$ and an $(n,n)$-form $\Psi$ on $\Omega$,
we have
\begin{equation}
\label{eq:commutable-of-d-and-d^c}
	d f_{1} \wedge d^{c} f_{2} \wedge \Psi
	= \frac{\sqrt{-1}}{2} (\del f_{1} \wedge \delb f_{2} \wedge \Psi
		- \delb f_{1} \wedge \del f_{2} \wedge \Psi)
	= d f_{2} \wedge d^{c} f_{1} \wedge \Psi,
\end{equation}
which will be used repeatedly.

Let $\Upsilon$ be a CR pluriharmonic function on $M$.
Take its pluriharmonic extension to $\Omega$~\cite{Hirachi2014}*{Theorem A.1},
for which we use the same letter $\Upsilon$ by abuse of notation.
Note that $\iota_{M}^{\ast} d^{c} \Upsilon = d^{c}_{\CR} \Upsilon$,
which is a consequence of \cref{lem:trace-free-extension}.
It follows from \cref{eq:commutable-of-d-and-d^c} that
\begin{equation}
\label{eq:integrand-of-main-equality}
	d U \wedge d^{c} \Upsilon \wedge \omega_{+}^{n}
	= d \Upsilon \wedge d^{c} U \wedge \omega_{+}^{n}.
\end{equation}

We first show that $\lp \int_{\rho < - \epsilon}$
of the left hand side of \cref{eq:integrand-of-main-equality} exists.
On the one hand,
\begin{align}
	\int_{\rho < - \epsilon} d (G (- \rho)^{n + 1} \log (- \rho)) \wedge d^{c} \Upsilon \wedge \omega_{+}^{n}
	&= \int_{\rho = - \epsilon} (G \epsilon^{n + 1} \log \epsilon)
		d^{c} \Upsilon \wedge (\epsilon^{-1} d \vartheta + \Pi)^{n} \\
	&= O(1)
\end{align}
as $\epsilon \to + 0$.
On the other hand,
\begin{equation}
	d(\log (- \rho) - F) \wedge d^{c} \Upsilon \wedge \omega_{+}^{n}
	= \pqty{\frac{d \rho}{\rho} - d F} \wedge d^{c} \Upsilon \wedge \omega_{+}^{n}
\end{equation}
is $(- \rho)^{- n - 1}$ times a smooth form up to the boundary.
Hence
\begin{equation}
	\int_{\rho < - \epsilon} d(\log (- \rho) - F) \wedge d^{c} \Upsilon \wedge \omega_{+}^{n}
	=\sum_{m = 1}^{n} a_{m} \epsilon^{- m} + b \log \epsilon + O(1)
\end{equation}
as $\epsilon \to + 0$.
Therefore
$\lp \int_{\rho < - \epsilon} d U \wedge d^{c} \Upsilon \wedge \omega_{+}^{n}$ is well-defined.

We would like to compute $\lp \int_{\rho < - \epsilon}$ of both sides
of \cref{eq:integrand-of-main-equality}.
On the one hand,
\begin{align}
	\lp \int_{\rho < - \epsilon} d U \wedge d^{c} \Upsilon \wedge \omega_{+}^{n}
	&= \lp \int_{\rho < - \epsilon} d ( U d^{c} \Upsilon \wedge \omega_{+}^{n} ) \\
	&= \lp \int_{\rho = - \epsilon} U d^{c} \Upsilon \wedge \omega_{+}^{n} \\
	&= \lp \int_{\rho = - \epsilon} (\log \epsilon - F - G \epsilon^{n + 1} \log \epsilon)
		d^{c} \Upsilon \wedge (\epsilon^{-1} d \vartheta + \Pi)^{n} \\
	&= \sum_{k = 0}^{n} \binom{n}{k} \lp \epsilon^{- k} \log \epsilon
		\int_{\rho = - \epsilon} d^{c} \Upsilon \wedge (d \vartheta)^{k} \wedge \Pi^{n - k}.
\end{align}
If $k \geq 1$,
the integrand $d^{c} \Upsilon \wedge (d \vartheta)^{k} \wedge \Pi^{n - k}$
is $d$-exact on the closed manifold $\Set{ \rho = - \epsilon }$.
Hence Stokes' theorem yields
\begin{equation}
\label{eq:log-part-of-LHS}
	\lp \int_{\rho < - \epsilon} d U \wedge d^{c} \Upsilon \wedge \omega_{+}^{n}
	= \int_{M} d^{c} \Upsilon \wedge \Pi^{n}
	= \int_{M} d^{c}_{\CR} \Upsilon \wedge (\iota_{M}^{\ast} \Pi)^{n}.
\end{equation}

On the other hand,
\begin{align}
	&\lp \int_{\rho < - \epsilon} d \Upsilon \wedge d^{c} U \wedge \omega_{+}^{n} \\
	&= \lp \int_{\rho < - \epsilon} d (\Upsilon d^{c} U \wedge \omega_{+}^{n})
		- \lp \int_{\rho < - \epsilon} \Upsilon d d^{c} U \wedge \omega_{+}^{n} \\
	&= \lp \int_{\rho = - \epsilon} \Upsilon
		[- \epsilon^{- 1} \vartheta - d^{c} F - d^{c}(G (- \rho)^{n + 1} \log (- \rho))]
		\wedge (\epsilon^{-1} d \vartheta + \Pi)^{n} \\
	& \quad + \lp \int_{\rho < - \epsilon} (n + 1)^{- 1} \Upsilon (\Box_{+} U) \omega_{+}^{n + 1} \\
	&= \lp \int_{\rho = - \epsilon} \Upsilon
		[- (d^{c} G) \epsilon^{n + 1} \log \epsilon
		+ G ((n+1) \epsilon^{n} \log \epsilon + \epsilon^{n}) \vartheta]
		\wedge (\epsilon^{- 1} d \vartheta + \Pi)^{n} \\
	& \quad + \lp \int_{\rho < - \epsilon} \Upsilon \omega_{+}^{n + 1} \\
	&= \frac{(- 1)^{n + 1}}{(n!)^{2}} \int_{M} \Upsilon Q_{\theta} \theta \wedge (d \theta)^{n}
		+ \lp \int_{\rho < - \epsilon} \Upsilon \omega_{+}^{n + 1},
\label{eq:log-part-of-RHS}
\end{align}
where the last equality follows from \cref{eq:definition-of-CR-Q-curvature}.
Hence it suffices to compute the second term.
We divide $\Upsilon \omega_{+}^{n + 1}$ into two parts:
\begin{equation}
	\Upsilon \omega_{+}^{n + 1}
	= \Upsilon \Pi^{n + 1}
		+ \sum_{k = 1}^{n + 1} \binom{n + 1}{k} \Upsilon (- d d^{c} \log (- \rho))^{k} \wedge \Pi^{n + 1 - k}.
\end{equation}
First,
$\Upsilon \Pi^{n + 1}$ is smooth up to the boundary,
and so
\begin{equation}
	\lp \int_{\rho < - \epsilon}\Upsilon \Pi^{n + 1}
	= 0.
\end{equation}
Next,
for $1 \leq k \leq n + 1$,
\begin{align}
	&\lp \int_{\rho < - \epsilon} \Upsilon (- d d^{c} \log (- \rho))^{k} \wedge \Pi^{n + 1 - k} \\
	&= \lp \int_{\rho < - \epsilon} d [\Upsilon (- d^{c} \log (- \rho))
		\wedge (- d d^{c} \log (- \rho))^{k - 1} \wedge \Pi^{n + 1 - k}] \\
	& \quad + \lp \int_{\rho < - \epsilon} d \Upsilon \wedge d^{c} \log (- \rho)
		\wedge (- d d^{c} \log (- \rho))^{k - 1} \wedge \Pi^{n + 1 - k}.
\end{align}
From Stokes' theorem and \cref{eq:commutable-of-d-and-d^c},
we obtain
\begin{align}
	&\lp \int_{\rho < - \epsilon} \Upsilon (- d d^{c} \log (- \rho))^{k} \wedge \Pi^{n + 1 - k} \\
	&= \lp \int_{\rho = - \epsilon} \Upsilon (\epsilon^{- 1} \vartheta)
		\wedge (\epsilon^{-1} d \vartheta)^{k-1} \wedge \Pi^{n + 1 - k} \\
	& \quad + \lp \int_{\rho < - \epsilon} d \log (- \rho) \wedge d^{c} \Upsilon
		\wedge (- d d^{c} \log (- \rho))^{k-1} \wedge \Pi^{n + 1 - k}\\
	&= \lp \int_{\rho < - \epsilon} d [\log (- \rho) d^{c} \Upsilon
		\wedge (- d d^{c} \log (- \rho))^{k-1} \wedge \Pi^{n + 1 - k}] \\
	&= \lp \epsilon^{-k+1} \log \epsilon \int_{\rho = - \epsilon} d^{c} \Upsilon
		\wedge (d \vartheta)^{k - 1} \wedge \Pi^{n + 1 - k}.
\end{align}
If $k = 1$,
this yields that
\begin{equation}
	\lp \int_{\rho < - \epsilon} \Upsilon (- d d^{c} \log (- \rho)) \wedge \Pi^{n}
	= \int_{M} d^{c}_{\CR} \Upsilon \wedge (\iota_{M}^{\ast} \Pi)^{n}.
\end{equation}
If $k \geq 2$,
the integrand $d^{c} \Upsilon \wedge (d \vartheta)^{k - 1} \wedge \Pi^{n + 1 - k}$ is $d$-exact
on the closed manifold $\Set{\rho = - \epsilon}$.
Hence Stokes' theorem implies
\begin{equation}
	\lp \int_{\rho < - \epsilon} \Upsilon (- d d^{c} \log (- \rho))^{k} \wedge \Pi^{n + 1 - k}
	= 0.
\end{equation}
Thus we have
\begin{equation}
	\lp \int_{\rho < - \epsilon} \Upsilon \omega_{+}^{n + 1}
	= (n + 1) \int_{M} d^{c}_{\CR} \Upsilon \wedge (\iota_{M}^{\ast} \Pi)^{n}.
\end{equation}
Therefore
\cref{eq:log-part-of-RHS} yields
\begin{equation}
\label{eq:log-part-of-RHS-2}
	\begin{split}
		&\lp \int_{\rho < - \epsilon} d \Upsilon \wedge d^{c} U \wedge \omega_{+}^{n} \\
		&= \frac{(- 1)^{n + 1}}{(n!)^{2}} \int_{M} \Upsilon Q_{\theta} \theta \wedge (d \theta)^{n}
			+ (n + 1) \int_{M} d^{c}_{\CR} \Upsilon \wedge (\iota_{M}^{\ast} \Pi)^{n}.
	\end{split}
\end{equation}

\begin{proof}[Proof of \cref{thm:cohomological-expression-of-CR-Q-curvature}]
	We deduce from \cref{eq:integrand-of-main-equality,eq:log-part-of-LHS,eq:log-part-of-RHS-2} that
	\begin{equation}
		\int_{M} d^{c}_{\CR} \Upsilon \wedge (\iota_{M}^{\ast} \Pi)^{n}
		= \frac{(- 1)^{n + 1}}{(n!)^{2}} \int_{M} \Upsilon Q_{\theta} \theta \wedge (d \theta)^{n}
			+ (n + 1) \int_{M} d^{c}_{\CR} \Upsilon \wedge (\iota_{M}^{\ast} \Pi)^{n},
	\end{equation}
	or equivalently,
	\begin{equation}
		n \int_{M} d^{c}_{\CR} \Upsilon \wedge (\iota_{M}^{\ast} \Pi)^{n}
		= \frac{(- 1)^{n}}{(n!)^{2}} \int_{M} \Upsilon Q_{\theta} \theta \wedge (d \theta)^{n}.
	\end{equation}
	Since $- (n + 2) \iota_{M}^{\ast} \Pi$ is a representative of
	$2 \pi c_{1}(T^{1, 0} X |_{M}) = 2 \pi c_{1}(T^{1, 0} M)$,
	we have
	\begin{equation}
		\langle [d^{c}_{\CR} \Upsilon] \cup c_{1}(T^{1, 0} M)^{n}, [M] \rangle
		= \frac{(n + 2)^{n}}{n (n!)^{2} (2 \pi)^{n}}
			\int_{M} \Upsilon Q_{\theta} \theta \wedge (d \theta)^{n},
	\end{equation}
	which completes the proof.
\end{proof}

\begin{proof}[Proof of \cref{thm:orthogonality-of-CR-Q-curvature}]
	It follows from~\cite{Takeuchi2020-Chern}*{Theorem 1.1} that
	$c_{1}(T^{1, 0} M)^{n} = 0$ in $H^{2 n}(M, \bbR)$.
	Combining this fact with \cref{thm:cohomological-expression-of-CR-Q-curvature} yields
	\cref{thm:orthogonality-of-CR-Q-curvature}.
\end{proof}

\section*{Acknowledgements}

This work was motivated by a preprint version of~\cite{Case2021-Q-preprint}.
The author is grateful to Jeffrey Case for sharing it.
He also would like to thank Taiji Marugame and Yoshihiko Matsumoto for helpful comments.

\bibliography{my-reference,my-reference-preprint}

\end{document}